\documentclass[12pt,a4paper]{article}

\usepackage{amsmath,amssymb}
\usepackage{amsthm}

\newtheorem{lemma}{Lemma}
\newtheorem{theorem}{Theorem}

\def\F{\mathbb F}

\def\GL{\mathop{\rm GL}}
\def\GG{\mathop{\mathcal G}}
\def\SS{\mathop{\mathcal S}}
\def\TT{\mathop{\mathcal T}}
\def\YY{\mathop{\mathcal Y}}

\def\ie{{\em i.e.}}
\def\XX{\mathcal X}

\def\be{\begin{eqnarray}}
\def\ee{\end{eqnarray}}
\def\bes{\begin{eqnarray*}}
\def\ees{\end{eqnarray*}}

\def\ll{\left(\!\!}
\def\rr{\!\!\right)}

\parskip2mm
\parindent0mm


\title{A point model for the free cyclic submodules over ternions}
\author{Hans Havlicek, Jaros\l aw Kosiorek, Boris Odehnal}

\begin{document}

\maketitle
\date{}


\begin{abstract}
\noindent We show that the set of all (unimodular and non-unimodular) free
cyclic submodules of $T^2$, where $T$ is the ring of ternions over a
commutative field, admits a point model in terms of a smooth algebraic variety.
\end{abstract}

{\em Key Words:} Ternions, projective line, cyclic submodules, Grassmann variety, Segre variety.

{\em MSC 2010:} 51B99, 51C99, 14J26, 16D40


\section{Introduction}

The present paper is devoted to the study of the geometry of free cyclic
submodules of $T^2$, where $T$ denotes the ring of ternions (upper triangular
$2\times 2$ matrices) over a commutative field $\mathbb F$. In a more geometric
language we refer to these submodules as ternionic {\em points}. We show that
the set of all such points admits a model in terms of a smooth algebraic
variety in a projective space on an $8$-dimensional vector space over $\mathbb
F$. Our exposition is based on the Grassmann variety representing the planes
($3$-dimensional subspaces) of $\mathbb F^6$ and the recent paper
\cite{havlicek+matras+pankov-11}, where a model for the set of ternionic points
in terms of planes of $\mathbb F^6$ was given and exhibited.

From \cite{havlicek+saniga-09a}, the set of ternionic points splits into two
orbits under the natural action of $\GL_2(T)$. One orbit comprises the set of
{\em unimodular points}, the elements of the other orbit are called {\em
non-unimodular points}. The variety representing the entire set of ternionic
points accordingly splits into two parts. The first arises by removing a single
line from the variety, \ie, we obtain a quasiprojective variety in the
terminology of \cite{shafarevich-94}. The second part is just that
distinguished line. So, as regards the unimodular points, our results parallel
those of A.~Herzer, who developed a very general representation theory (for
unimodular points only). See \cite[Chapt.~11 and 12]{blunck+he-05} and
\cite{herz-95} for further details. It seems worth pointing out that all points
of the distinguished line are smooth, thus giving a negative answer to the
question whether non-unimodularity of a ternionic point would imply its image
point being singular.

The geometry over ternions based on unimodular points has attracted many
authors. We refer to \cite{beck-36b}, \cite{beck-36a}, \cite{benz-77},
\cite{depu-59}, \cite{depu-60}, and the references therein. There are but a few
papers dealing with the properties of the remaining (non-unimodular) ternionic
points \cite{havlicek+matras+pankov-11}, \cite{havlicek+saniga-09a},
\cite{saniga+h+p+p-08a}, \cite{saniga+pracna-08a}.

Results and notions which are used without further reference can be found, for
example, in \cite {blunck+he-05}, \cite{herz-95}, and \cite{pankov-10a}.


\section{Main results}

Let $\F$ be a commutative field and $T$ be the ring of {\em ternions}, \ie,
the upper triangular matrices
	\be
	\ll\begin{array}{rr}
	    a_{11} & a_{12} \\
	    0      & a_{22}
	\end{array}\rr
	\label{ternion}
	\ee
with entries $a_{ij}\in\F$. Sometimes we identify $x\in\F$ with the
ternion $xI$, where $I$ is the $2\times 2$ identity matrix. $\F$ is the center
of $T$ and $T$ is a three-dimensional algebra over $\F$.

According to \cite{havlicek+saniga-09a} the non-zero cyclic submodules of the
free $T$-left module $T^2$ fall into five orbits under the natural action of
the group $\GL_2(T)$. In the following we focus on two types of submodules
given by a representative
	\be
	X_0=T\left[
	    \ll
		  \begin{array}{rr} 1 & 0\\ 0 & 1\end{array}
	    \rr,
	    \ll
		  \begin{array}{rr} 0 & 0\\ 0 & 0\end{array}
	   \rr\right]=
	\left\{
	      \left.\left[\ll
		    \begin{array}{rr} x & y\\ 0 & z\end{array}
	      \rr,
	      	\ll
		    \begin{array}{rr} 0 & 0 \\ 0 & 0\end{array}
	      \rr
	\right]\right|\ x,y,z\in\F
	\right\}
	\label{represent_X}
	\ee
and
	\be
	Y_0=T\left[
	    \ll
		  \begin{array}{rr} 0 & 0\\ 0 & 1\end{array}
	    \rr,
	    \ll
		  \begin{array}{rr} 0 & 1\\ 0 & 0\end{array}
	   \rr\right]=
	\left\{
	      \left.\left[\ll
		    \begin{array}{rr} 0 & y\\ 0 & z\end{array}
	      \rr,
	      \ll
		    \begin{array}{rr} 0 & x\\ 0 & 0\end{array}
	      \rr
	\right]\right|\ x,y,z\in\F
	\right\}.
	\label{represent_Y}
	\ee
An arbitrary submodule $X$ from the orbit of $X_0$ given in
\eqref{represent_X} is obtained by applying $S\in\GL_2(T)$ to $X_0$, \ie,
$X=X_0 S$ and will be called $X$-submodule in the following. Submodules of this
type are free and arise from unimodular pairs of $T^2$. They are the points of
the {\em projective line over $T$} as considered in \cite {blunck+he-05} and
\cite{herz-95}. This yields the general form of an $X$-submodule as
	\be
	X=\left\{
		\left.\left[
			\ll\begin{array}{cc}
	  a_{11}x & a_{12}x+a_{22}y\\ 0 & a_{22}z
	  		\end{array}\rr,
			\ll\begin{array}{cc}
	  b_{11}x & b_{12}x+b_{22}y\\ 0 & b_{22}z
	  		\end{array}\rr
		\right]\right|
	x,y,z\in\F\right\},
	\label{submodule_X}
	\ee
with the constraint
	\be
	(a_{11},b_{11})\ne(0,0)\ne(a_{22},b_{22}).
	\label{reg_cond_X}
	\ee
In the same way we obtain the general form of the $Y$-submodules using \eqref{represent_Y}
	\be
	Y=\left\{\left.\left[\ll\begin{array}{cc}
	  0 & a_{22}y+c_{22}x\\ 0 & a_{22}z\end{array}\rr,
	\ll\begin{array}{cc}
	  0 & b_{22}y+d_{22}x\\ 0 & b_{22}z\end{array}\rr\right]\right|\
	x,y,z\in\F\right\},
	\label{submodule_Y}
	\ee
with the additional condition
	\be
	a_{22}d_{22}-b_{22}c_{22}\ne0.
	\label{reg_cond_Y}
	\ee
The $Y$-submodules are precisely those free cyclic submodules which
cannot be generated by a unimodular pair. We call them {\em free non-unimodular
points}.

We identify $T^2$ with $\F^6$ by
	\bes
	\ll\begin{array}{rr|rr} a_{11} & a_{12} & b_{11} & b_{12}\\
	0 & a_{22} & 0 & b_{22}\end{array}\rr
	\mapsto
	(a_{11},b_{11},a_{22},b_{22},a_{12},b_{12})\in\F^6.
	\ees
From \eqref{submodule_X} we see that the $X$-submodules are
three-dimensional subspaces of $\F^6$ and we can extract the base vectors
	\be
	\begin{array}{ccccccccr}
	( & a_{11}, & b_{11}, &      0, &      0, & a_{12}, & b_{12}),\\
	( &      0, &      0, &      0, &      0, & a_{22}, & b_{22}),\\
	( &      0, &      0, & a_{22}, & b_{22}, &      0, &    ~~0).
	\end{array}
	\label{basepoints}
	\ee
In the following we use the Grassmann variety
$\GG\subset\F^{20}=\F^6\wedge\F^6\wedge\F^6$ representing the three-dimensional
subspaces of $\F^6$. We denote the vectors of the standard basis of $\F^{20}$
by $e_{ijk}$, $1\le i<j<k\le 6$, and write $E_{ijk}=\F e_{ijk}$ for the
respective base point of the projective coordinate system.

Every $X$-submodule is now represented by a point in the Grassmann variety $\GG$.
We compute the Grassmann
coordinates of an $X$-submodule with base points \eqref{basepoints} and the
non-vanishing coordinates read
	\be
	\begin{array}{lcrlcrlcr}
	p_{135} & = & -a_{11}a_{22}^2, &
	p_{136} & = & p_{145} = -a_{11}a_{22}b_{22}, &
	p_{146} & = & -a_{11}b_{22}^2,\\[2mm]
	p_{235} & = & -b_{11}a_{22}^2, &
	p_{236} & = & p_{245} = -b_{11}a_{22}b_{22}, &
	p_{246} & = & -b_{11}b_{22}^2,\\[2mm]
	& & & p_{356} & = &  a_{22}(a_{12}b_{22}-b_{12}a_{22}),\\[2mm]
	& & & p_{456} & = &  b_{22}(a_{12}b_{22}-b_{12}a_{22}).
	\end{array}
	\label{parametric}
	\ee
This gives a parametric representation of the Grassmann image $\XX$,
say, of the set of $X$-submodules. The parameters
$a_{11},a_{22},b_{11},b_{22}\in\F$ in \eqref{parametric} are subject to the
restrictions given in \eqref{reg_cond_X}, but there is no restriction on
$a_{12}$ and $b_{12}$. An easy calculation shows that the Grassmann image $\YY$
of the set comprising all $Y$-submodules is the line spanned by $E_{356}$ and
$E_{456}$. A parametric representation of $\YY$ is given by
	\be
	p_{356}=a_{22}(a_{22}d_{22}-b_{22}c_{22})\quad{\rm and}\quad
	p_{456}=b_{22}(a_{22}d_{22}-b_{22}c_{22}),
	\label{parametric_Y}
	\ee
where again only the non-vanishing coordinate functions are given.

We want to understand the set $\XX\cup\YY$. For that end we derive the
equations of $\XX\cup\YY$ and show the geometric meaning of both, its
parametrization and the equations as well.

At first we observe that $\XX\cup\YY$ is contained in a subspace of dimension
$8$ which is given by the linear equations
	\be
	\begin{array}{c}
	p_{123}=p_{124}=p_{125}=p_{126}=p_{134}=p_{156}=p_{234}=
			p_{256}=p_{345}=p_{346}=0,\\[2mm]
	p_{236}-p_{245}=0,\quad	p_{136}-p_{145}=0.
	\end{array}
	\label{linear}
	\ee
These are precisely the vanishing coordinate functions not mentioned in
\eqref{parametric} together with two obvious identities.

In order to describe $\XX\cup\YY$ we show:

\begin{lemma}\label{LEM_plane}
For any $(u,v)\in\F^2\setminus\{(0,0)\}$ and $i\in\{1,2\}$ let
	\be
	q_i(u,v) & = & u^2e_{i35}+uv(e_{i36}+e_{i45})+v^2e_{i46},					\label{conic_section}\\[2mm]
	r(u,v) &= & ue_{356}+ve_{456}.\label{line}
	\ee
Then $\gamma(u,v):=\F q_1(u,v)+\F q_2(u,v)+\F r(u,v)$ is a plane. As
$(u,v)$ varies in $\F^2\setminus\{(0,0)\}$ the union of these planes equals
$\XX\cup\YY$.
\end{lemma}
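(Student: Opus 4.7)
The plan is to prove the two assertions of the lemma separately: first that $\gamma(u,v)$ is genuinely a plane, then that the union of these planes coincides with $\XX\cup\YY$.

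For the plane assertion, I will observe that the three vectors $q_1(u,v)$, $q_2(u,v)$, and $r(u,v)$ involve pairwise disjoint subsets of the standard basis of $\F^{20}$: $q_1$ is a combination of $e_{135},e_{136},e_{145},e_{146}$; $q_2$ of $e_{235},e_{236},e_{245},e_{246}$; and $r$ of $e_{356},e_{456}$. Whenever $(u,v)\ne(0,0)$, each of these three vectors is nonzero (at least one of $u^2,uv,v^2$ is nonzero for the $q_i$, and at least one of $u,v$ for $r$). Linear independence is then immediate and $\gamma(u,v)$ has dimension three, \ie, it is a plane.

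For the inclusion $\XX\cup\YY\subseteq\bigcup_{(u,v)}\gamma(u,v)$, the key step is to substitute $u=a_{22}$ and $v=b_{22}$ in the parametric formulas \eqref{parametric} and \eqref{parametric_Y} and read off that the Grassmann image of the $X$-submodule \eqref{submodule_X} is
\[
-a_{11}\,q_1(u,v)\;-\;b_{11}\,q_2(u,v)\;+\;(a_{12}v-b_{12}u)\,r(u,v),
\]
and that of the $Y$-submodule \eqref{submodule_Y} is $(u\,d_{22}-v\,c_{22})\,r(u,v)$. The regularity conditions \eqref{reg_cond_X} and \eqref{reg_cond_Y} ensure $(u,v)\ne(0,0)$, so both images genuinely lie in the plane $\gamma(u,v)$.

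For the reverse inclusion, I take an arbitrary nonzero $\alpha\,q_1(u,v)+\beta\,q_2(u,v)+\lambda\,r(u,v)\in\gamma(u,v)$ and split into cases. If $(\alpha,\beta)\ne(0,0)$, I reconstruct an $X$-submodule by setting $a_{11}=-\alpha$, $b_{11}=-\beta$, $a_{22}=u$, $b_{22}=v$, together with any $(a_{12},b_{12})$ solving the linear equation $a_{12}v-b_{12}u=\lambda$; such a pair exists precisely because $(u,v)\ne(0,0)$, and \eqref{reg_cond_X} is immediate from the construction. If instead $(\alpha,\beta)=(0,0)$, then $\lambda\ne 0$, and the analogous linear-equation argument produces a $Y$-submodule with $a_{22}=u$, $b_{22}=v$, and $u\,d_{22}-v\,c_{22}=\lambda\ne 0$, so \eqref{reg_cond_Y} holds. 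There is no deep obstacle in this proof; it amounts to algebraic bookkeeping with \eqref{parametric} and \eqref{parametric_Y}. The only point requiring some care is the case split in the reverse inclusion together with the simultaneous verification that the regularity conditions \eqref{reg_cond_X} and \eqref{reg_cond_Y} can always be arranged.
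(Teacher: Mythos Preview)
Your proof is correct and follows essentially the same route as the paper's: the disjoint-support argument for the plane assertion is exactly the content of the paper's observation that the conics $c_i$ and the line $\YY$ live in mutually skew subspaces, and your case split $(\alpha,\beta)\ne(0,0)$ versus $(\alpha,\beta)=(0,0)$ with the subsequent parameter reconstruction matches the paper's treatment verbatim. If anything, your version is slightly more explicit in writing out the Grassmann image as $-a_{11}q_1-b_{11}q_2+(a_{12}v-b_{12}u)r$ and in handling the $\YY$ case by solving for $(c_{22},d_{22})$, whereas the paper simply invokes \eqref{parametric_Y}.
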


\begin{proof}
First we note that for $i\in\{1,2\}$ and variable $(u,v)$ the points $\F
q_i(u,v)$ comprise a conic section $c_i$ in the plane spanned by $E_{i35}$,
$\F(e_{i36}+e_{i45})$, and $E_{i46}$, whereas the points $\F r(u,v)$ form the
line spanned by $E_{356}$ and $E_{456}$. Hence any $\gamma(u,v)$ is a plane.

Given any point of $\XX$ with parameters as in \eqref{parametric} and
\eqref{reg_cond_X}, we let $u=a_{22}$ and $v=b_{22}$. Then one immediately
reads off from \eqref{parametric} that this point belongs to $\gamma(u,v)$.

Conversely, let $G$ be a point of $\gamma(u,v)$ which lies off the line $\YY$.
So we may assume $G=\F g$, where
	$$
	g=g_1q_1(u,v)+g_2q_2(u,v)+g_3r(u,v)
	$$
with $(g_1,g_2,g_3)\in\F^3$ and $(g_1,g_2)\ne(0,0)$. In order to show
that the coordinates of $G$ can be expressed as in \eqref{parametric} it
suffices to let $a_{22}=u$, $b_{22}=v$, $a_{11}=-g_1$, $b_{11}=-g_2$. Moreover,
since $(a_{22},b_{22})\ne(0,0)$ there exists at least one pair
$(a_{12},b_{12})\in\F^2$ such that $a_{12}b_{22}-b_{12}a_{22}=g_3$.

We already know from \eqref{parametric_Y} that $\YY$ is the Grassmann image of
the set of all $Y$-modules.
\end{proof}

The parametric representation of $\XX$ and $\YY$ from \eqref{parametric} and
\eqref{parametric_Y} allows to derive equations of $\XX\cup\YY$ by eliminating
parameters. From now on we restrict ourselves to the $8$-dimensional subspace
given by Eqs.\ \eqref{linear} and disregard the ambient $20$-dimensional space.
In other words, equations \eqref{linear} are always assumed to be satisfied
without further notice. We find nine quadratic equations which will be arranged
in three groups. The first two equations
	\be
	\begin{array}{c}
	p_{135}p_{146}-p_{145}^2=0,\quad p_{235}p_{246}-p_{245}^2=0
	\end{array}
	\label{cones}
	\ee
describe quadratic cones erected on the conic sections $c_i$ mentioned
in the proof of Lemma \ref{LEM_plane}. These quadratic cones have
five-dimensional subspaces for their vertices and six-dimensional generators.

Further we consider the quadratic equations
	\be
	\begin{array}{c}
	p_{146}p_{245}-p_{145}p_{246}=0,\quad
	p_{135}p_{246}-p_{235}p_{146}=0,\\[2mm]
	p_{135}p_{245}-p_{145}p_{235}=0.
	\label{segre}
	\end{array}
	\ee
They determine three quadratic cones on ruled quadrics. Together with
the linear equations $p_{356}=p_{456}=0$ the equations \eqref{segre} describe a
Segre variety $\SS$ which is the product of a line and a plane, cf.\
\cite[p.~189]{hirschfeld+thas-91}. In parametric form this Segre variety can be
written as the set of all points with coordinates
	\be
	p_{i35}=u_1v_i,\quad p_{i36}=p_{i45}=u_2v_i,\quad p_{i46}=u_3v_i
	\label{segre_parametric}
	\ee
with $i\in\{1,2\}$, $(u_1,u_2,u_3)\in\F^3\setminus\{(0,0,0)\}$, and
$(v_1,v_2)\in\F^2\setminus\{(0,0)\}$, where all other coordinates are
understood to be zero.

Finally the third set of quadratic equations reads
	\be
	\begin{array}{c}
	p_{135}p_{456}-p_{145}p_{356}=0,\quad
	p_{145}p_{456}-p_{146}p_{356}=0,\\[2mm]
	p_{235}p_{456}-p_{245}p_{356}=0,\quad
	p_{245}p_{456}-p_{246}p_{356}=0.\\
	\label{rest}
	\end{array}
	\ee
These are the equations of quadratic cones on ruled quadrics. All of them have
four-dimensional vertices, six-dimensional generators, and share the line $\YY$
spanned by the base points $E_{356}$ and $E_{456}$.

Now we can prove the following result:

\begin{theorem}
The set $\XX\cup\YY$ is a smooth algebraic variety given by the equations
\eqref{linear}, \eqref{cones}, \eqref{segre}, and \eqref{rest}.
\end{theorem}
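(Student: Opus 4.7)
The plan is to split the theorem into two statements and prove them separately: that $\XX\cup\YY$ coincides, as a set, with the common zero locus $V$ of \eqref{linear}, \eqref{cones}, \eqref{segre}, \eqref{rest}, and that the resulting algebraic variety is smooth.

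The inclusion $\XX\cup\YY\subseteq V$ follows by direct substitution of the parametric representations \eqref{parametric} and \eqref{parametric_Y} into each quadratic equation; the linear equations \eqref{linear} are already built into the parametrizations. For the reverse inclusion $V\subseteq\XX\cup\YY$, let $P\in V$ have coordinates $p_{ijk}$. If $p_{i35}=p_{i36}=p_{i46}=0$ for both $i\in\{1,2\}$, then $P$ lies on $\YY\subseteq\XX\cup\YY$. Otherwise, by Lemma \ref{LEM_plane} it suffices to exhibit $(u,v)\in\F^2\setminus\{(0,0)\}$ and scalars $g_1,g_2,g_3\in\F$ with $P=\F\bigl(g_1q_1(u,v)+g_2q_2(u,v)+g_3r(u,v)\bigr)\in\gamma(u,v)$. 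Assuming, say, $(p_{135},p_{136},p_{146})\ne(0,0,0)$, equations \eqref{cones} force this triple onto the conic with parametrization $(u^2,uv,v^2)$, determining $(u:v)\in\P^1$ and $g_1\ne0$ with $(p_{135},p_{136},p_{146})=g_1(u^2,uv,v^2)$. The three relations \eqref{segre} then express $(p_{235},p_{236},p_{246})$ as a scalar multiple of the same vector, say $g_2(u^2,uv,v^2)$. Substituting these into \eqref{rest} produces identities of the shape $g_1 u(u\,p_{456}-v\,p_{356})=0$ together with three companions, forcing $(p_{356},p_{456})=g_3(u,v)$ for some $g_3\in\F$.

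For smoothness I would invoke the Jacobian criterion. The variety $V$ has dimension $3$---it is the union of the $2$-planes $\gamma(u,v)$ indexed by $\P^1$---so its codimension in the $\P^7$ cut out by \eqref{linear} equals $4$; the Jacobian of the nine quadrics \eqref{cones}--\eqref{rest} with respect to the eight remaining coordinates must therefore have rank exactly $4$ at every point of $V$. A case analysis on the vanishing pattern of the coordinates suffices: at a point of $\XX\setminus\YY$, say with $p_{135}\ne0$, one selects a $4\times 4$ minor involving one row from \eqref{cones}, suitable rows from \eqref{segre}, and a row from \eqref{rest}, whose determinant is a nonzero monomial in the nonvanishing coordinates; on $\YY$ (block coordinates vanish, $(p_{356},p_{456})\ne(0,0)$) the first five Jacobian rows vanish identically, but the four rows arising from \eqref{rest} have supports in the disjoint $x$- and $y$-blocks and are manifestly independent.

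The principal obstacle is precisely this last case: five of the nine quadrics degenerate along the exceptional line $\YY$, and one might naively expect singularities at the non-unimodular stratum (as highlighted in the Introduction). The subtle point is that the equations in \eqref{rest} alone furnish the four independent Jacobian rows needed for a smooth codimension-$4$ intersection along $\YY$, which is exactly why non-unimodularity does not entail singularity. Organizing the complementary case analysis on $\XX$ so that no special configuration (such as $v=0$, or $g_2=g_3=0$) drops the rank below $4$ is the remaining bookkeeping task; in each such case a different selection of Jacobian rows---chosen so that the leading nonzero entries occupy distinct columns---witnesses rank $4$ explicitly.
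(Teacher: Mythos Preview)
Your argument for the set-theoretic equality $\XX\cup\YY=V$ is essentially the paper's own, with only the order of appeal to \eqref{cones} and \eqref{segre} reversed: the paper first invokes the Segre parametrization \eqref{segre_parametric} to write $(p_{i35},p_{i45},p_{i46})=(u_1v_i,u_2v_i,u_3v_i)$ and then uses \eqref{cones} to force $(u_1,u_2,u_3)$ onto the conic, whereas you first place one triple on the conic via \eqref{cones} and then use \eqref{segre} to make the second triple proportional. Both routes are equally direct.

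For smoothness you take a genuinely different path. The paper does \emph{not} use the Jacobian of the defining equations; instead it computes the partial derivatives of the single parametrization \eqref{parametric} (which, once one allows $(a_{11},b_{11})=(0,0)$, also sweeps out $\YY$) and checks that their span is $4$-dimensional at every parameter value, regardless of characteristic. Your approach via the Jacobian of the nine quadrics is dual to this and arguably more transparent along the delicate stratum $\YY$: the observation that precisely the four gradients from \eqref{rest} survive there, with pairwise disjoint supports, makes smoothness at the non-unimodular points completely explicit, whereas the paper's parametric argument buries this inside a rank computation with two redundant parameters. Conversely, the paper's method handles all of $\XX\cup\YY$ uniformly and so avoids your case-by-case selection of $4\times4$ minors across the strata of $\XX$. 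Both arguments are valid in every characteristic.
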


\begin{proof}
It is easily verified that the coordinate functions of both parametric
representations, namely that of $\XX$ given in \eqref{parametric} and that of
$\YY$ given in \eqref{parametric_Y} annihilate Eqs.\ \eqref{linear},
\eqref{cones}, \eqref{segre}, and \eqref{rest}.

Conversely, we have to show that for any point $P=\F(\ldots,p_{ijk},\ldots)$
given by Eqs.\ \eqref{linear}, \eqref{cones}, \eqref{segre}, and \eqref{rest}
there are parameters such that $P$ can be written as in \eqref{parametric} or
\eqref{parametric_Y}, respectively.

We distinguish two cases: Assume first that $(p_{356},p_{456})\ne(0,0)$,
whereas all other coordinates of $P$ vanish. Then we let $a_{22}=p_{356}$ and
$b_{22}=p_{456}$. There exists at least one pair $(c_{22},d_{22})\in\F^2$ such
that \eqref{reg_cond_Y} is satisfied. Now \eqref{parametric_Y} shows that $P$
is a point on $\YY$.

Otherwise $(p_{135},p_{145},p_{146},p_{235},p_{245},p_{246})$ is a non-trivial
zero of \eqref{segre}. We infer from \eqref{segre_parametric} that there are
parameters $(u_1,u_2,u_3)\in\F^3\setminus\{(0,0,0)\}$ and
$(v_1,v_2)\in\F^2\setminus\{(0,0)\}$ such that
	\be
	(p_{i35},p_{i45},p_{i46})=(u_1v_i,u_2v_i,u_3v_i)
		\quad{\rm for}\quad i\in\{1,2\}.
	\label{segre_repara}
	\ee
Hence $(p_{i35},p_{i45},p_{i46})\ne(0,0,0)$ for at least one value of
$i$. We may assume w.l.o.g.\ that this is the case for $i=1$, whence $v_1\ne
0$. We substitute \eqref{segre_repara} in the first equation of \eqref{cones},
divide by $v_1^2$, and get the constraint $u_1u_3-u_2^2=0$ which reminds us of
the equation of a conic section. The well-known Veronese parametrization of a
conic section shows that there exists a pair
$(a_{22},b_{22})\in\F^2\setminus\{(0,0)\}$ and a constant
$k\in\F\setminus\{0\}$ such that
	\bes
	(u_1,u_2,u_3)=k(a_{22}^2,a_{22}b_{22},b_{22}^2).
	\ees
Further we define $a_{11}:=-kv_1$, $b_{11}:=-kv_2$, whence $p_{i35}$,
$p_{i45}$, and $p_{i46}$ for $i\in\{1,2\}$ are already given as in
\eqref{parametric}. Now we substitute into \eqref{rest} and obtain
	\bes
	-a_{11}a_{22}(a_{22}p_{456}-b_{22}p_{356})=0, &
		-a_{11}b_{22}(a_{22}p_{456}-b_{22}p_{356})=0,\\
	-b_{11}a_{22}(a_{22}p_{456}-b_{22}p_{356})=0, &
		-b_{11}b_{22}(a_{22}p_{456}-b_{22}p_{356})=0.
	\ees
At least one of these equations shows us that
$(p_{356},p_{456})=m(a_{22},b_{22})$ for some $m\in\F$. As
$(a_{22},b_{22})\ne(0,0)$ we can find $a_{12},b_{12}\in\F$ such that
$m=a_{12}b_{22}-b_{12}a_{22}$. Thus, finally, the coordinates of $P$ are
expressed like in \eqref{parametric} which shows $P\in\XX$.

In order to show that $\XX\cup\YY$ is smooth we compute the partial derivatives
of the parametrization given in \eqref{parametric}. Then it is a simple and
straightforward calculation that the subspace spanned by the derivatives is of
dimension $4$ at any point of $\XX\cup\YY$, regardless of the characteristic of
$\F$.
\end{proof}


\section{Final remarks}

The contents of Lemma \ref{LEM_plane} as well as the parametrization of $\XX$
given in \eqref{parametric} admit a geometric interpretation. The homogeneous
parameter $(a_{22},b_{22})\ne(0,0)$ determines a unique point $\F
q_i(a_{22},b_{22})$ on either conic section $c_i$. Further $(a_{22},b_{22})$
determines a unique point $\F r(a_{22},b_{22})$ on the line $\YY$. Any plane
$\gamma$ mentioned in Lemma \ref{LEM_plane} is spanned by these three points.
Thus there is a projective mapping from the projective line of parameters to
the planes on $\XX\cup\YY$. The homogeneous parameters
$(a_{12},b_{12})\ne(0,0)$ and $(a_{11},b_{11})\ne(0,0)$ serve as coordinates of
points of $\XX$ within the generator planes $\gamma$.

We exhibit some subrings within the ternions. Firstly, for $a_{11}=a_{22}$ and
$a_{12}=0$ in \eqref{ternion} we obtain the subring of scalar matrices. It is
clearly isomorphic to the ground field $\F$. Accordingly, by letting
$a_{11}=a_{22}$, $b_{11}=b_{22}$, and $a_{12}=b_{12}$ in \eqref{parametric}, we
obtain a parametric representation of a twisted cubic ${\mathcal F}\subset\XX$
as a model for the projective line of $\F$. Note that no $Y$-submodule can be
written in terms of scalar matrices.

Secondly, letting $a_{11}=a_{22}$ in \eqref{ternion} we find a representation
of the {\em dual numbers} over $\F$. As before, no non-unimodular points arise
from dual numbers. Hence a parametric representation of the free cyclic
submodules is obtained from \eqref{parametric} by substituting $a_{22}=a_{11}$
and $b_{22}=b_{11}$. This yields a ruled surface in a subspace of dimension
$6$. Any pair $(a_{11},b_{11})\ne(0,0)$ fixes a point on the line joining the
previously mentioned points $C_1$ and $C_2$. So we have a twisted cubic winding
about a ``tube-like surface'' $\TT\subset\SS$ whose generators are the lines
spanned by corresponding points $C_1$ and $C_2$. The cubic curve meets any
generator of this tube-like surface exactly once. The ruled surface appearing
as the point model of the free cyclic submodules over the dual numbers has two
distinguished directrices: the twisted cubic $\mathcal F$ and the line $\YY$.
There is a projective correspondence between the directrices, and corresponding
points are joined in order to form the generators of this ruled surface.

Thirdly, we put $a_{12}=0$, then formula \eqref{ternion} gives a representation
of the {\em double numbers} over $\F$. The unimodular points of the projective
line over the double numbers form the tube-like surface $\TT$ mentioned above.
This is easily seen, if we insert $a_{12}=b_{12}=0$ in \eqref{parametric}. The
surface $\TT$ is a ruled surface whose two-dimensional generators appear as the
subspaces spanned by corresponding points of $C_1$ and $C_2$. The surface $\TT$
is a subset of the Segre $\SS$, for $c_i$ is contained in the plane spanned by
$\F E_{i35}$, $\F(e_{i36}+e_{i45})$, and $\F E_{i45}$ with $i\in\{1,2\}$.
Non-unimodular points over the double numbers do not exist.


\section*{Acknowledgements}
This work was carried out within the framework of the Scientific and
Technological Cooperation Poland-Austria 2010--2011. The authors wish to thank
Andrzej Matra\'{s} (Olsztyn) for his useful remarks in the course of numerous
vivid discussions.


\noindent
Hans Havlicek and Boris Odehnal\\
Institut f\"{u}r Diskrete Mathematik und Geometrie\\
Technische Universit\"{a}t\\
Wiedner Hauptstra{\ss}e 8--10/104\\
A-1040 Wien\\
Austria\\
\texttt{havlicek@geometrie.tuwien.ac.at}\\
\texttt{boris@geometrie.tuwien.ac.at}
\par~\par
\noindent%
Jaros\l aw Kosiorek\\
Faculty of Mathematics and Computer Science\\
University of Warmia and Mazury\\
S\l oneczna 54\\
PL-10-710 Olsztyn\\
Poland\\
\texttt{kosiorek@matman.uwm.edu.pl}
\end{document}